\documentclass[reqno,a4paper,11pt]{article}
\usepackage{amsmath,amssymb,amsthm,amsfonts,mathrsfs}
\usepackage{epsf}

\headheight .3cm
\headsep .4cm
\topskip 0cm
\footskip 0.5cm
\oddsidemargin -0.2cm
\evensidemargin -0.2cm
\topmargin .5cm
\textwidth 16.5cm
\textheight 21cm

\theoremstyle{plain}
\newtheorem{theorem}{Theorem} [section]
\newtheorem{corollary}[theorem]{Corollary}
\newtheorem{lemma}[theorem]{Lemma}

\theoremstyle{definition}

\theoremstyle{remark}

\numberwithin{theorem}{section}
\numberwithin{equation}{section}
\numberwithin{figure}{section}

\def\R{\mathbb R}

\def\d{\delta}
\def\e{\varepsilon}

\def\var{\varphi}

\renewcommand{\R}{\mathbb{R}}
\newcommand{\one}{\mathbf{1}}
\newcommand{\ee}{\mathbf{e}}
\renewcommand{\S}{\mathbb{S}}

\renewcommand{\H}{\mathcal H}

%\mathbb{T}

\title{How to recognize convexity of a set from its marginals}

\author{Alessio Figalli and David Jerison}
\date{}

\begin{document}
\maketitle

\begin{abstract}
We investigate the regularity of the marginals onto
hyperplanes for sets of finite perimeter. We prove, in particular,  that 
if a set of finite perimeter has log-concave marginals onto a.e. hyperplane then 
the set is convex.
\end{abstract}

\section{Introduction}

Given $E\subset \R^n$ a Borel set,  it is well-known that if $E$ is convex then its marginals onto any hyperplane are log-concave.
More precisely, let us denote by $\one_E$ the characteristic function of $E$ (that is $\one_E(x)=1$ if $x \in E$, $\one_E(x)=0$ if $x \not\in E$),
and for any direction $\ee \in \S^{n-1}$ let $\pi_\ee:\R^n \to \ee^\perp$ denote the orthogonal projection onto the hyperplane $\ee^\perp:=\{x \in \R^n:\ee\cdot x=0\}$.
If we define
$$
w_\ee:\ee^\perp \to \R,\qquad w_\ee(x):=\int_\R \one_E(x+t\ee)\,dt,
$$
then $w_\ee$ is of the form $e^{-V}$ for some convex function $V:\ee^\perp\simeq \R^{n-1} \to \R\cup\{+\infty\}$.
Actually, by the Brunn-Minkowski inequality, an even stronger result is true, namely $w_\ee^{1/(n-1)}$ is concave.
(We refer to \cite{G} for more details.)

The aim of this paper is to show that, under rather weak regularity assumptions on $E$,
the converse of this result is true: if a set has log-concave marginals onto a.e. hyperplane $\ee^\perp$ then it is convex.
Actually, we will prove a stronger result: we will not assume that the marginals are log-concave, but only that they have convex support and are uniformly
Lipschitz strictly inside their support.\\

To state our result, let us introduce some notation.
For any direction $\ee$, we define the set $A_\ee:=\{w_\ee>0\}\subset \ee^\perp$.
(Notice that if $w_\ee$ is log-concave then $A_\ee$ is convex.)
Also, for any $\delta>0$ we set $A_\ee^\delta:=\{x \in A_\ee:{\rm dist}(x,\partial A_\ee)\geq \delta\}$.
We recall that a set $E$ is of \textit{finite perimeter} if the distributional derivative $\nabla \one_E$ of $\one_E$ is a finite measure, that is
$$
\int_{\R^n} |\nabla \one_E| <\infty.
$$
Also, we use $\H^{k}$ to denote the $k$-dimensional Hausdorff measure.
Here is our main result:
\begin{theorem}
\label{thm:main}
Let $E\subset \R^n$ be a bounded set of finite perimeter and assume that $A_\ee$ is convex for $\H^{n-1}$-a.e. $\ee \in \S^{n-1}$.
Suppose further that $w_\ee$ is locally Lipschitz inside $A_\ee$ for $\H^{n-1}$-a.e. $\ee \in \S^{n-1}$ and the following uniform bound holds: for any $\delta>0$ there exists a constant $C_\delta$ such that
$$
|\nabla w_\ee| \leq C_\delta \quad \text{a.e. inside $A_\ee^\delta$}, \qquad \text{for $\H^{n-1}$-a.e. $\ee \in \S^{n-1}$.}
$$
Then $E$ is convex (up to a set of measure zero).
\end{theorem}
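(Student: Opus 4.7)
The plan is to establish that, under the two hypotheses, $E$ is \emph{slice-convex in $\H^{n-1}$-a.e.\ direction}: for a.e.\ $\ee\in\S^{n-1}$ and a.e.\ $x\in A_\ee$, the slice $E\cap(x+\R\ee)$ is a single interval. A standard argument then shows that slice-convexity in $\H^{n-1}$-a.e.\ direction forces $E$ to be convex up to a null set: if two Lebesgue points $p,q$ of $E$ satisfied $[p,q]\not\subset E$, then for a positive-$\H^{n-1}$ family of directions close to $(q-p)/|q-p|$ one could exhibit a positive-$\H^{n-1}$ family of disconnected slices at base-points near $\pi_\ee(p)$, violating slice-convexity.

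The central analytic tool is a slicing formula for $\nabla w_\ee$. For $\H^{n-1}$-a.e.\ $\ee$, BV slicing applied to $\one_E$ yields that, for $\H^{n-1}$-a.e.\ $x\in\ee^\perp$, the slice $E\cap(x+\R\ee)$ is a finite disjoint union of intervals whose endpoints $y$ lie in the reduced boundary $\partial^\ast E$ with $\nu_E(y)\cdot\ee\neq 0$. The distributional identity $\nabla w_\ee=(\pi_\ee)_\#\nabla\one_E$, combined with De Giorgi's structure theorem for $\partial^\ast E$, gives for any $v\in\ee^\perp$
$$
\partial_v w_\ee(x)=-\sum_{y\in\pi_\ee^{-1}(x)\cap\partial^\ast E}\frac{\nu_E(y)\cdot v}{|\nu_E(y)\cdot\ee|}.
$$
The uniform Lipschitz hypothesis $|\nabla w_\ee|\le C_\delta$ on $A_\ee^\delta$ thus becomes a uniform pointwise bound on this ``angular-tangent'' sum of normals at slice-boundary points.

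To derive a contradiction, assume $|\co E\setminus E|>0$. The convex-support hypothesis gives $A_\ee=\pi_\ee(E)=\pi_\ee(\co E)$ for a.e.\ $\ee$, so by Fubini there is a positive-$\H^{n-1}$ set of directions for which
$$
B_\ee:=\{x\in A_\ee:E\cap(x+\R\ee)\text{ has }\geq 2\text{ components}\}
$$
has positive $\H^{n-1}$-measure. Fix such a $\ee$ that also satisfies the Lipschitz bound, pick $\delta>0$ so that $B_\ee\cap A_\ee^\delta$ has positive measure, and select a boundary point $x_0$ of $B_\ee$ interior to $A_\ee^\delta$. As $x\to x_0$ from $B_\ee$ two of the slice intervals merge at a limit point $y_0\in\partial^\ast E$. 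Two cases arise. In the \emph{perpendicular-merging} case $\nu_E(y_0)\cdot\ee=0$, the two terms in the slicing formula at the pair of merging endpoints have denominators $|\nu\cdot\ee|\to 0$; since the top-of-lower and bottom-of-upper endpoints have opposite signs of $\nu\cdot\ee$ but converge to the same limit normal, their numerators $\nu\cdot v$ have the same sign for a suitable $v\perp\ee$, so the two divergent contributions add rather than cancel and $|\nabla w_\ee(x)|\to\infty$, contradicting the Lipschitz bound. In the \emph{tangential} case $\nu_E(y_0)\parallel\ee$, the pinch at $y_0$ is a point where the boundary of $E$ touches itself with a normal parallel to $\ee$; slicing in perpendicular directions $\ee'\in\ee^\perp$ then reveals cusp-type shrinkage of slices of $E$, which forces the Lipschitz constants of $w_{\ee'}$ on $A_{\ee'}^\delta$ to diverge as $\ee'$ varies through a positive-$\H^{n-1}$ neighborhood, contradicting the \emph{uniform-in-$\ee$} character of the Lipschitz hypothesis.

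The technical heart is to make this merging dichotomy quantitative for sets of merely finite perimeter, whose boundary is only rectifiable. One must (i) select $\ee$ by an integral-geometric argument so that the tangential set $\{y\in\partial^\ast E:\nu_E(y)\cdot\ee=0\}$ has $\H^{n-1}$-measure zero on $\partial^\ast E$, ruling out pathological cancellations among singular terms in the angular-tangent sum, and (ii) perform a blow-up analysis based on De Giorgi's structure theorem at the merging point $y_0$, converting the geometric pictures of ``cusp'' and ``tangential pinch'' into the respective divergences of $|\nabla w_\ee|$ and of the uniform Lipschitz constants $C_\delta$.
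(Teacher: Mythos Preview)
Your approach is genuinely different from the paper's, and the difference matters because the pointwise ``merging'' analysis you describe breaks down precisely in the finite perimeter setting. The paper in fact flags this: the heuristic that a non-convex smooth set must have $|w_\theta'|=\infty$ at an interior point of the support ``fails when $E$ is not smooth (consider for instance a disk with a small square removed from its interior).'' For that example, in every direction except the two parallel to the sides, the marginal is Lipschitz; the merging of slice intervals occurs at the corners of the square, which are \emph{not} in $\partial^\ast E$, so De Giorgi blow-up gives you nothing there, and your dichotomy (perpendicular-merging vs.\ tangential) simply does not apply. More generally, for a set of finite perimeter the set $B_\ee$ need not have a well-behaved topological boundary inside $A_\ee^\delta$, there is no reason a sequence of merging endpoints should converge to a single $y_0$ at all, and even if they do, $y_0$ may lie outside the reduced boundary, so neither $\nu_E(y_0)$ nor the blow-up you invoke is available. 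Your step~(ii) thus hides the entire difficulty rather than resolving it.

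The paper circumvents this by never looking at a single direction or a single merging point. It works in $n=2$ first and uses an \emph{integrated} quantity: the identity
\[
\int_0^\pi\!\int_\R |w_\theta'|^2\,dt\,d\theta \;=\; \bar c\,\|\varphi\|_{H^{1/2}(\R^2)}^2
\]
and a localized version of it. After mollifying $\one_E$ to $\varphi_\e$, one shows (Lemma~2.2) that the localized $H^{1/2}$ energy in any ball centered at $x_0\in\partial^\ast E$ is $\gtrsim |\log\e|\,\H^1(\partial^\ast E\cap B_{r_0/2}(x_0))$, while the uniform interior Lipschitz bound on $w_\theta$ forces the measures $|\log\e|^{-1}|w_{\theta,\e}'|^2\,dt\,d\theta$ to concentrate, as $\e\to 0$, on the set $\bigcup_\theta\{a_\theta,b_\theta\}\times\{\theta\}$ corresponding to the boundary of the projected convex hull. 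If $E$ were not convex there would be an $x_0\in\partial^\ast E$ strictly inside the convex hull; centering coordinates there and choosing a cutoff $\psi$ supported well inside all the $(a_\theta,b_\theta)$ produces the contradiction $0\gtrsim r_0^{n-1}>0$. The higher-dimensional case is then reduced to $n=2$ by slicing with $2$-planes. The point is that averaging over all directions and passing through the $H^{1/2}$ norm replaces your delicate pointwise limit $|\nabla w_\ee(x)|\to\infty$ by a robust measure-concentration statement that needs only the finite perimeter structure and the a.e.\ uniform Lipschitz bound.
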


Notice that, since log-concave functions are uniformly Lipschitz in the interior of their support, our assumption is weaker than asking that $w_\ee$ is log-concave for $\H^{n-1}$-a.e. $\ee \in \S^{n-1}$.
Hence our theorem implies the following:
\begin{corollary}
Let $E\subset \R^n$ be a bounded set of finite perimeter and assume that, for $\H^{n-1}$-a.e. $\ee \in \S^{n-1}$, $w_\ee$ is log-concave.
Then $E$ is convex (up to a set of measure zero).
\end{corollary}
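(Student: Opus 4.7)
The plan is to derive the corollary directly from Theorem~\ref{thm:main}: for each $\ee$ at which $w_\ee$ is log-concave, log-concavity automatically implies all of the hypotheses of the theorem, with constants uniform in $\ee$. Fix such an $\ee$ and write $w_\ee=e^{-V_\ee}$ with $V_\ee:\ee^\perp\to\R\cup\{+\infty\}$ convex. Convexity of $A_\ee=\{w_\ee>0\}=\{V_\ee<+\infty\}$ is immediate, as it is the effective domain of a convex function; and $w_\ee$ is locally Lipschitz on the interior of $A_\ee$ because convex functions are locally Lipschitz in the relative interior of their effective domain, and $\nabla w_\ee=-w_\ee\,\nabla V_\ee$.

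The only substantive issue, and hence the main obstacle, is to upgrade this to a \emph{uniform} bound $|\nabla w_\ee|\leq C_\delta$ on $A_\ee^\delta$ with $C_\delta$ independent of $\ee$. I would rely on the following standard estimate: a log-concave function $f=e^{-V}$ on $\R^{n-1}$, supported in a convex body $A\subset B_R$ with $\|f\|_\infty\leq M$ and $\int f\geq c_0>0$, is Lipschitz on $A^\delta$ with a constant $L(R,M,c_0,\delta,n)$ depending only on these parameters. Applied to $w_\ee$, all these parameters are controlled by $E$ alone: boundedness of $E$ gives $R,M\leq\mathrm{diam}(E)$, and $\int w_\ee=|E|$ gives $c_0=|E|$. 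Thus the uniform hypothesis of Theorem~\ref{thm:main} holds with $C_\delta=L(\mathrm{diam}(E),\mathrm{diam}(E),|E|,\delta,n)$, and the theorem yields that $E$ is convex up to a null set.

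To prove the lemma, I would use $|\nabla f|=f|\nabla V|$ together with the subgradient inequality $V(x+tv)\geq V(x)+t\,v\cdot\nabla V(x)$; choosing $v=\nabla V(x)/|\nabla V(x)|$ yields the exponential decay $f(x+tv)\leq f(x)e^{-t|\nabla V(x)|}$, valid for $t\in[0,\delta]$ when $x\in A^\delta$. A pigeonhole argument based on $\int f\geq c_0$ and $|A|\leq (2R)^{n-1}$ produces a point of $A$ with $f\geq c_0/(2R)^{n-1}$; by quasi-concavity of $f$ this lower bound extends to a sizable convex subregion of $A^\delta$, and the decay estimate then forces $|\nabla V|$ to be bounded on $A^\delta$ by a constant depending only on $R,M,c_0,\delta,n$. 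Multiplying by $f\leq M$ closes the estimate.
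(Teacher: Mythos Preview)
Your approach is exactly the paper's: deduce the corollary from Theorem~\ref{thm:main} by checking that log-concavity of $w_\ee$ yields all the hypotheses there. In fact the paper's entire argument is the single sentence ``since log-concave functions are uniformly Lipschitz in the interior of their support, our assumption is weaker than asking that $w_\ee$ is log-concave''; the uniform Lipschitz estimate is taken as a known fact and not proved. You go further than the paper by (correctly) isolating the only nontrivial point---uniformity of $C_\delta$ in $\ee$---and by identifying the right invariants that make this uniformity work: $\mathrm{diam}(A_\ee)\le\mathrm{diam}(E)$, $\|w_\ee\|_\infty\le\mathrm{diam}(E)$, and $\int w_\ee=|E|$ are indeed the parameters on which the Lipschitz constant depends, and they are all independent of $\ee$.

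One remark on your sketch of the auxiliary lemma: the step ``by quasi-concavity of $f$ this lower bound extends to a sizable convex subregion of $A^\delta$'' is the weak link. Quasi-concavity only tells you that $\{f\ge c_0/(2R)^{n-1}\}$ is convex, not that it contains the particular point $x+tv$ you need for the decay estimate to bite. A cleaner route is to first prove a uniform lower bound $f\ge c$ on $A^{\delta/2}$ (using, e.g., that the superlevel set $\{f\ge c_0/(2|B_R|)\}$ has volume $\ge c_0/(2M)$ together with log-concavity along rays from that set through $x$), and only then feed it into your exponential decay inequality with $t=\delta/2$. This is standard and does not affect the overall correctness of your reduction.
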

In light of the fact that marginals of log-concave functions are log-concave (see for instance \cite[Section 11]{G}),
one may wonder if the corollary above may be generalized to functions, that is, whether the fact that $\var:\R^n\to [0,+\infty)$ has 
log-concave marginals implies that $\var$ is log-concave.
Unfortunately this stronger result is false. To see this, consider $\var:=\one_{B_1}-\e \psi$, where $\psi:\R^n\to \R$ is a smooth radial non-negative function supported in a small neighborhood of the origin.
Since the marginals of $\one_{B_1}$ are positive and uniformly log-concave near the origin, it is easy to see that the marginals of $\var$ are log-concave for $\e>0$ sufficiently small,
but of course $\var$ is not log-concave.\\

The assumption that $E$ is of finite perimeter is technical, and we expect the result to be true without this assumption.   However, finite perimeter plays an important role 
in the proof, which is based on measuring the perimeter using a fractional
Sobolev norm of a smoothing of $\one_E$.   To formulate that result,
we introduce some further notations. 

For notational convenience, we will say that $a \lesssim b$ if there exists a dimensional constant $C$ such that $a \leq Cb$, and $a \simeq b$ if $a \lesssim b$ and $b \lesssim a$.  

Consider the $1/2$ Sobolev norm, defined by
\begin{equation}
\label{eq:norm}
\|u \|_{H^{1/2}}^2:=\int_{\R^n}\int_{\R^n}\frac{|u (x) - u(y)|^2}{|x-y|^{n+1}}\,dy\,dx
\end{equation}
This norm can be used to measure the perimeter as follows:
\begin{theorem} \label{thm:norm}  Let $E\subset B_1 \subset \R^n$ have
finite perimeter.  Let $\gamma_n$ be the standard Gaussian in $\R^n$,
$\displaystyle \gamma_{n,\e}(x):=\frac{1}{\e^n}\gamma_n(x/\e)$,  and
$\var_\e:=\one_E\ast \gamma_{n,\e}$.    Then 
\[
\limsup_{\e \to 0 } \frac{1}{|\log \e|} \| \var_\e\|_{H^{1/2}}^2  \simeq
\liminf_{\e \to 0 } \frac{1}{|\log \e|} \| \var_\e\|_{H^{1/2}}^2  \simeq
\int_{\R^n} |\nabla \one_E| 
\]
i.e., the ratios of these quantities are bounded above and below
by positive dimensional constants.
\end{theorem}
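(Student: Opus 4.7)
The plan is to pass through the Fourier representation $\|u\|_{H^{1/2}}^2\simeq\int_{\R^n}|\xi|\,|\hat u(\xi)|^2\,d\xi$ and the subordination identity $|\xi|^{-1}\simeq\int_0^\infty e^{-t|\xi|^2}\,t^{-1/2}\,dt$, so as to reduce the problem to $L^2$ estimates on $\mu\ast\gamma_{n,s}$, where $\mu:=\nabla\one_E$ and $s$ ranges over $[\e,\infty)$. Writing $\widehat{\varphi_\e}=\widehat{\one_E}\,\hat\gamma_{n,\e}$ and using $\hat\mu(\xi)=2\pi i\,\xi\,\widehat{\one_E}(\xi)$, one has $|\xi|\,|\widehat{\one_E}(\xi)|^2\simeq|\hat\mu(\xi)|^2/|\xi|$, so Plancherel followed by the change of variables $s=\sqrt{t+c\e^2}$ yields
\[
\|\varphi_\e\|_{H^{1/2}}^2\;\simeq\;\int_{\R^n}\frac{|\hat\mu(\xi)|^2}{|\xi|}\,e^{-c\e^2|\xi|^2}\,d\xi\;\simeq\;\int_{c_1\e}^\infty\frac{s\,F(s)}{\sqrt{s^2-c_1^2\e^2}}\,ds,
\]
where $F(s):=\|\nabla\psi_s\|_{L^2}^2$ and $\psi_s:=\one_E\ast\gamma_{n,s}$. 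The theorem therefore reduces to the two-sided estimate $F(s)\simeq P(E)/s$ for $s$ smaller than some scale $s_0(E)>0$, where $P(E):=\int|\nabla\one_E|$.

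The upper bound $F(s)\lesssim P(E)/s$ is immediate from the $L^1$--$L^\infty$ interpolation $\|f\|_{L^2}^2\leq\|f\|_{L^1}\|f\|_{L^\infty}$ applied to $\nabla\psi_s=\mu\ast\gamma_{n,s}$, using $\|\nabla\psi_s\|_{L^1}\leq|\mu|(\R^n)=P(E)$ and $\|\nabla\psi_s\|_{L^\infty}\leq\|\nabla\gamma_{n,s}\|_{L^1}\lesssim 1/s$. The matching lower bound $F(s)\gtrsim P(E)/s$ is the delicate step, and I would obtain it by invoking De~Giorgi's blow-up theorem for finite-perimeter sets: at $\H^{n-1}$-a.e.\ $x\in\partial^*E$ the rescalings $(E-x)/s$ converge in $L^1_{\mathrm{loc}}$ to the half-space $H_{\nu(x)}$, so $\psi_s(x+s\,\cdot\,)$ converges locally in $C^1$ to $\one_{H_{\nu(x)}}\ast\gamma_{n,1}$, whose gradient has a fixed positive $L^2$-mass $c_n$ on $B_1(0)$. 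A Besicovitch-type cover of the ``good'' portion of $\partial^*E$ at scale $s$ produces $\gtrsim P(E)/s^{n-1}$ boundedly-overlapping balls $B_s(x_i)$; rescaling the bound $\int_{B_1}|\nabla(\one_{(E-x_i)/s}\ast\gamma_{n,1})|^2\gtrsim c_n$ back to each $B_s(x_i)$ and summing gives $F(s)\gtrsim(P(E)/s^{n-1})\cdot s^{n-2}=P(E)/s$ for $s\leq s_0$.

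Feeding the two-sided bound on $F$ into the integral representation finishes the argument: for the upper bound the tail $s\gtrsim 1$ is handled by the stronger decay $F(s)\lesssim|E|^2\,s^{-n-2}$ (which holds because $E\subset B_1$ is bounded), yielding $\|\varphi_\e\|_{H^{1/2}}^2\lesssim P(E)\,|\log\e|$; for the lower bound one restricts the integral to $s\in[2c_1\e,s_0]$, on which the weight $s/\sqrt{s^2-c_1^2\e^2}$ is comparable to $1$, and obtains $\|\varphi_\e\|_{H^{1/2}}^2\gtrsim P(E)\log(s_0/\e)\simeq P(E)\,|\log\e|$ in the liminf as $\e\to 0$. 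The principal obstacle is the quantitative blow-up underlying the lower bound on $F(s)$: one must ensure that, at each small scale $s$, the set of points on $\partial^*E$ where $\psi_s$ is close to a half-space mollification carries a definite fraction of $P(E)$, uniformly as $s\to 0$.
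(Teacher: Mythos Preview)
Your outline is correct, and shares with the paper the two substantive ingredients: the $L^1$--$L^\infty$ interpolation for the upper bound, and De Giorgi's blow-up together with a covering at small scales for the lower bound. The packaging, however, differs.

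For the \emph{upper bound}, the paper also reduces to a scale-integral via the Fourier side, but it uses the time derivative $\partial_t u_t$ of the convolution rather than the spatial gradient: it shows
\[
\|u_\e\|_{H^{1/2}}^2 \lesssim \int_\e^\infty \|\partial_t u_t\|_{L^2}^2\,dt,
\]
and then bounds $\|\partial_t u_t\|_{L^1}\lesssim P(E)$ and $\|\partial_t u_t\|_{L^\infty}\lesssim 1/t$ by direct kernel estimates. Your subordination identity, leading instead to $F(s)=\|\nabla\psi_s\|_{L^2}^2$, is a clean alternative and the interpolation step is identical in spirit.

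For the \emph{lower bound}, the paper does not pass through Fourier or through $F(s)$ at all: it works directly on the double integral $\int\!\!\int |\varphi_\e(x)-\varphi_\e(y)|^2\,|x-y|^{-n-1}\,dx\,dy$, restricts to dyadic shells $r/4\le |x-y|\le r/2$ with $r\in(100\e,\sqrt\e)$, and shows each shell contributes $\gtrsim P(E)$ via the covering by disjoint $r$-balls centred on the ``good'' set $F_m$ of reduced-boundary points whose blow-up is already $\delta$-close at all scales $\le 2^{-m}$. Summing over $\simeq|\log\e|$ dyadic scales gives the result. Your route, bounding $F(s)\gtrsim P(E)/s$ pointwise and then integrating in $s$, is an equivalent organisation of the same geometry.

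On the ``principal obstacle'' you flag: the uniformity you need is exactly what the sets $F_m$ provide. Since $F_m\uparrow\partial^*E$, one fixes $m$ with $\H^{n-1}(F_m)\ge\tfrac12 P(E)$; then for every $s\le 2^{-m}$ and every $x\in F_m$ the rescaled set $(E-x)/s$ is $\delta$-close to a half-space on balls of all relevant radii simultaneously (by the supremum in the definition of $D_m$), which together with the rapid decay of $\gamma_n$ gives the uniform lower bound $\int_{B_s(x)}|\nabla\psi_s|^2\gtrsim s^{n-2}$. With this in hand your covering argument goes through, and your $s_0$ is $2^{-m}$ up to a fixed dimensional factor. (For the cover itself, a Vitali-type maximal disjoint family suffices; Besicovitch would also work with a bounded-overlap loss.)

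One remark: the paper in fact proves a \emph{localised} lower bound (the liminf over any ball $B_{r_0}(x_0)$ controls $\H^{n-1}(\partial^*E\cap B_{r_0/2}(x_0))$), which is what is actually needed later for Theorem~\ref{thm:main}. Your argument as written yields only the global statement of Theorem~\ref{thm:norm}; adapting it to the local version would require localising the subordination representation, which is less immediate than the paper's physical-space approach.
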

 We have not investigated 
the connection, if any, between our norm and the notion of fractional perimeter 
introduced in \cite{CRS} and whose relationship to the classical perimeter can
be found in \cite{ADM,CV}.  Our norm is somewhat  different in the spirit.
On the one hand, as our norm is quadratic, the 
analysis performed in \cite{CV,ADM} does not apply in our situation.  On the
other hand, the Hilbert structure allows us to exploit Fourier transform techniques.\\

As we shall see, Theorem \ref{thm:main}  in $n$ dimensions follows easily from the case $n=2$. In two dimensions the result says  if almost every marginal is supported on an interval and is uniformly Lipschitz in its interior, then the set is convex up
to set of measure zero.
The strategy of the proof is the following. 
Consider $E\subset \R^2$.  Given $\theta \in [0,\pi]$, set $w_\theta:=w_{\ee_\theta}$ where $\ee_\theta=(\cos(\theta),\sin(\theta))$.  If $E$ 
is smoothly bounded, but not convex, then one expects
that for some direction $\theta$, the derivative of the marginal $w_\theta$ is infinite at some interior point of (the convex hull of) its support (see Figure \ref{fig1}).
\begin{figure}[ht]
%\centerline{\epsfysize=1.5in \epsfbox{fig1.eps}}
\caption{{\small If $E$ is a smooth non-convex set, for a.e. $\theta$ the marginal $w_\theta$ has infinite derivative at some interior point.
However, it is easy to check that this argument fails when $E$ is not smooth (consider for instance a disk with a small square removed from its interior).
Still, we can show that some suitable integral quantity has to blow-up.}}
\label{fig1}
\end{figure}
For domains
that merely have finite perimeter, the quantity that diverges is,
roughly speaking, a suitable localized version of  $\int\int |w_\theta'(t)|^2dt\,d\theta$.   We
make this quantitative by considering the mollification $\var_\e$ of $\one_E$.  

More precisely, given a bounded set of finite perimeter $E$, and $\var_\e$ as in Theorem \ref{thm:norm} above,
we show that 
$$
\|\var_\e\|_{H^{1/2}}^2 \lesssim |\log(\e)| \qquad \text{as $\e \to 0$},
$$
and a localized version of Theorem \ref{thm:norm} saying that 
\begin{equation}
\label{eq:local norm}
\frac{1}{|\log(\e)|}\int_{B_r(x_0)}\int_{B_r(x_0)}\frac{|\var_\e(x) - \var_\e(y)|^2}{|x-y|^{n+1}}\,dy\,dx
\end{equation}
controls from above the perimeter of $E$ inside $B_{r/2}(x_0)$ as $\e \to 0$.

We then focus on the case $n=2$ and, by use of the Fourier transform, show
that \eqref{eq:local norm} is majorized by 
\begin{equation}
\label{eq:norm marginals}
\frac{1}{|\log(\e)|}\int_0^\pi \int_\R \psi(t)^2 |w_\theta' \ast \gamma_{1,\e}|^2(t)\,dt\,d\theta,
\end{equation}
where $\psi:\R\to\R$ is a suitable smooth cut-off function.
Now, under our assumption on $w_\theta$, the fact that $|\log \e| \to \infty$ 
as $\e\to 0$ shows that the measures
$$
\frac{1}{|\log(\e)|} |w_\theta' \ast \gamma_{1,\e}|^2(t)\,dt\,d\theta
$$
concentrate on the union over $\theta$ of the boundaries of the support of $w_\theta$ (which correspond to the boundary of the support of the projections of the convex
hull of $E$). If $E$ is not convex, it is not difficult to see that this information is incompatible with the fact that the expression in \eqref{eq:local norm}
controls from above the perimeter of $E$ at every point (in particular, at points on the reduced boundary of $E$ which are inside the support of the convex hull), proving the result.
Once the theorem is proved in two dimensions, the higher dimensional case follows by a slicing argument.\\

The paper is organized as follows.  In Section \ref{sect:BV H}
we prove Theorem \ref{thm:norm} and a local version, valid in all dimensions,
showing that \eqref{eq:local norm} controls the perimeter.  
Then, in Section \ref{sect:H marginals} we majorize \eqref{eq:local norm} by
\eqref{eq:norm marginals} in dimension $2$.   
Finally, in Section \ref{sect:proof} we prove Theorem \ref{thm:main}.\\

\textit{Acknowledgements:} AF was partially supported by NSF Grant DMS-0969962.
DJ was partially supported by NSF Grant DMS-1069225.
Most of this work has been done during AF's visit at MIT in the Fall 2012,
and during DJ's visit at UT Austin in January 2013.
Both authors acknowledge the warm hospitality of these institutions.

\section{A Hilbert norm for sets of finite perimeter}
\label{sect:BV H}

In this section we prove Theorem \ref{thm:norm}.
This argument is valid in any dimension. 
Let us recall that  $u \in BV(\R^n)$ if its distributional derivative $\nabla u$ is a finite measure, and 
\[
\| u \|_{BV} = \int_{\R^n} |\nabla u| = |\nabla u|(\R^n).
\]
Given a set $E$ of finite perimeter, there is a suitable notion of boundary, 
called the \textit{reduced boundary} and denoted by $\partial^*E$, such that the following is true:
$$
\int_{\R^n}|\nabla \one_E|=\H^{n-1}(\partial^*E),
$$
and for any $x \in \partial^*E$ the following hold:
\begin{equation}\label{eq:fullmeasure}
\H^{n-1}(\partial^*E \cap B_r(x))\simeq r^{n-1}\qquad \text{as $r\to 0$}
\end{equation}
and there exists a unit vector $\nu(x)$ (called \textit{inner measure theoretical normal to $E$ at $x$})
such that
\begin{equation}
\label{eq:blow up}
\frac{1}{|B_r|}\int_{B_r}|\one_E(x+y)-\one_{\R^+}(\nu(x)\cdot y)|\,dy\to 0 
\qquad \text{as $r\to 0$}
\end{equation}
Furthermore, $\nu$ is a measurable function of $x\in \partial^*E$.  
(We refer to \cite[Sections 3.3 and 3.5]{AFP} or \cite[Chapters 12 and 15]{M} for more details.)

\begin{lemma}
\label{lem:1}
Let $u \in BV(\R^n)$ be supported in the unit ball $B_1$.  
Assume $|u|\leq 1$, and set $u_\e:=u\ast \gamma_{n,\e}$. Then there is 
a dimensional constant $C$ such that
$$
\|u_\e\|_{H^{1/2}}^2 \leq C |\log(\e)|\, \int_{\R^n} |\nabla u|, \qquad 0 < \e < 1/2
$$
\end{lemma}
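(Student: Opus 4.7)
The plan is to split the double integral defining $\|u_\e\|_{H^{1/2}}^2$ into three regimes based on the size of $|x-y|$ — small ($|x-y|\le\e$), medium ($\e\le|x-y|\le 1$), and large ($|x-y|\ge 1$) — and show that only the medium regime produces the logarithmic factor, via the integral $\int_\e^1 dr/r$. The key input is the pointwise bound
\[
|\nabla u_\e(p)| \le \rho_\e(p) := (\gamma_{n,\e} \ast |\nabla u|)(p),
\]
which follows from writing $\nabla u_\e = \gamma_{n,\e}\ast \nabla u$ (with $\nabla u$ a finite Radon measure) and applying the triangle inequality; note that $\|\rho_\e\|_{L^1}=V:=\int|\nabla u|$. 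The fundamental theorem of calculus along the segment from $y$ to $x$ then gives $|u_\e(x)-u_\e(y)|\le |x-y|\int_0^1\rho_\e(y+t(x-y))\,dt$.

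For the medium regime, I would use $|u_\e|\le 1$ to write $|u_\e(x)-u_\e(y)|^2\le 2|u_\e(x)-u_\e(y)|$, apply the mean-value bound, and then change variables $z=y+t(x-y)$ (with $dx=t^{-n}dz$ and $|x-y|=t^{-1}|z-y|$, so that $|x-y|^{-n}dx=|z-y|^{-n}dz$ cleanly). Fubini in $y$ yields $\int_{t\e\le|w|\le t}|w|^{-n}dw = c_n\log(1/\e)$, while the $z$-integral produces $\int\rho_\e=V$, for a contribution $\lesssim V|\log\e|$. For the small regime I would additionally use the crude bound $\|\nabla u_\e\|_\infty\lesssim 1/\e$ (from $\|u\|_\infty\le 1$ and $\|\nabla\gamma_{n,\e}\|_{L^1}\lesssim 1/\e$) to obtain $|u_\e(x)-u_\e(y)|\lesssim |x-y|/\e$; multiplying this with the mean-value estimate gives $|u_\e(x)-u_\e(y)|^2\lesssim (|x-y|^2/\e)\int_0^1\rho_\e\,dt$. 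The same change of variables now contributes $\int_{|w|\le t\e}|w|^{-(n-1)}dw\simeq t\e$: the factor $t$ cancels the $1/t$ that now appears from $|x-y|^{-(n-1)}dx$, and the factor $\e$ cancels the prefactor $1/\e$, leaving a bound $\lesssim V$ with no log. For the large regime, I would use $|u_\e(x)-u_\e(y)|^2\le 2(u_\e(x)^2+u_\e(y)^2)$; since $\int_{|z|\ge 1}|z|^{-(n+1)}dz$ is a dimensional constant, this reduces to $\|u_\e\|_{L^2}^2\le\|u\|_{L^2}^2$, which is $\lesssim V$ by the BV Sobolev embedding $\|u\|_{L^{n/(n-1)}}\lesssim V$ (with $\|u\|_\infty\le 1$ and $\operatorname{supp} u\subset B_1$ handled by H\"older; in the case $n=1$ one uses $\|u\|_\infty\le V$ directly).

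Summing the three contributions gives $\|u_\e\|_{H^{1/2}}^2\lesssim V|\log\e|$ for $0<\e\le 1/2$, as required. There is no conceptual obstacle — the proof is built from the mean-value theorem, one change of variables, and Fubini — but the argument does require care in matching each pointwise bound for $|u_\e(x)-u_\e(y)|$ to the correct range of $|x-y|$, so that the radial integral in $y$ converges at every scale and the logarithm comes exclusively from the middle range.
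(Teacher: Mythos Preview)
Your argument is correct and proceeds along a genuinely different line from the paper.  The paper works on the Fourier side: it first proves the extension-type inequality $\|u_\e\|_{H^{1/2}}^2\lesssim\int_\e^\infty\|\partial_t u_t\|_{L^2}^2\,dt$ (with $u_t=u\ast\gamma_{n,t}$) by computing $\widehat{\partial_t u_t}$ explicitly, then bounds $\|\partial_t u_t\|_{L^2}^2\le\|\partial_t u_t\|_\infty\|\partial_t u_t\|_{L^1}\lesssim t^{-1}\int|\nabla u|$ and integrates $t^{-1}\,dt$ from $\e$ to $2$ to pick up the $|\log\e|$.  You instead stay in physical space, split the double integral over $|x-y|$ into three annuli, and feed in the segment estimate $|u_\e(x)-u_\e(y)|\le|x-y|\int_0^1(\gamma_{n,\e}\ast|\nabla u|)(y+t(x-y))\,dt$; the change of variables $z=y+t(x-y)$ then produces exactly the same $\int_\e^1 dr/r$ in the middle range.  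The two proofs are really the same interpolation idea --- $L^2\le L^\infty\cdot L^1$, applied in the paper to $\partial_t u_t$ and in your argument to the increment $u_\e(x)-u_\e(y)$ --- but your route is more elementary (no Fourier transform, no extension), while the paper's version makes the link to the semigroup characterization of $H^{1/2}$ explicit, which is in keeping with how the norm is used elsewhere in the paper.
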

\begin{proof}
We begin by showing that 
\begin{equation}\label{eq:upperSobolev}
\|u_\e\|_{H^{1/2}}^2 \leq 
C \int_\e^\infty \int_{\R^n}|\partial_tu_t|^2\,dx\,dt, \qquad u_t=u\ast \gamma_{n,t}.
\end{equation}
Indeed, recall that 
\[
\|u_\e\|_{H^{1/2}}^2 = c \int_{\R^n} |\xi||\hat u_\e(\xi)|^2 \, d\xi 
\]
Furthermore,
\[
\partial_t \hat u_{\e+t}(\xi) = \partial_t \bigl(e^{-(t+\e)^2|\xi|^2}\bigr) \hat u(\xi) 
= -2(t+\e)|\xi|^2 e^{-(t^2+2\e t)|\xi|^2} \hat u_\e(\xi),
\]
thus, using the fact that the Fourier transform is an isometry in $L^2$, up to a multiplicative constant we get
\begin{align*}
\int_\e^\infty \int_{\R^n}|\partial_tu_t|^2\,dx\,dt =
\int_0^\infty \int_{\R^n} (t+\e)^2 |\xi|^4 e^{-2(t^2+2\e t)|\xi|^2} 
|\hat u_\e(\xi)|^2 \, d\xi \, dt .
\end{align*}
Moreover,
\begin{align*}
\int_0^\infty  (t+\e)^2 |\xi|^4 e^{-2(t^2+2\e t)|\xi|^2}\, dt 
& \geq 
\int_\e^\infty  t^2 |\xi|^4 e^{-8t^2|\xi|^2} \, dt 
+ 
\int_0^\e \e^2 |\xi|^4 e^{-8\e t|\xi|^2} \, dt  \\
& =
\int_{\e |\xi|}^\infty u^2 |\xi|^2 e^{-8u^2}\, \frac{du}{|\xi|}
+ 
\int_0^{\e^2 |\xi|^2} \e^2|\xi|^4 e^{-8u^2}\, \frac{du}{\e |\xi|^2}  \\
& \ge c|\xi|
\end{align*}
(If $\e |\xi| \le 1$, the first integral majorizes $|\xi|$, and if $\e |\xi| \ge 1$, then
the second integral majorizes $|\xi|$.)
Therefore, \eqref{eq:upperSobolev} follows.

Next, using the formula above for $\partial_t \hat u_{1+t}(\xi)$ (i.~e., $\e = 1$)
and integrating over $t\in [1,\infty)$, we have, up to a multiplicative
constant,
\[
\int_2^\infty \int_{\R^n}|\partial_tu_t|^2\,dx\,dt
= \int_{\R^n} \int_{1}^\infty (t+1)^2 |\xi|^4 e^{-2(t^2 + 2t)|\xi|^2}\, dt \, |\hat u_{1}(\xi)|^2 \, d\xi
\]
and
\[
\int_{1}^\infty (t+1)^2 |\xi|^4 e^{-2(t^2 + 2t)|\xi|^2} \, dt 
\le \int_0^\infty 4t^2 |\xi|^4 e^{-t^2|\xi|^2} \, dt = c|\xi|.
\]
Thus, we have 
$$
\int_2^\infty \int_{\R^n}
|\partial_tu_t|^2\,dx\,dt \le C \|u\ast \gamma_{n,1}\|_{H^{1/2}}^2 \leq C \|u\|_{L^1}^2 \le C \|u\|_{L^1}
$$
for some dimensional constant $C$ (since $u$ is bounded by $1$ and supported
in the unit ball).  Finally, again using that $u$  is supported in the unit ball,
the Sobolev inequality for BV functions \cite[Chapter 3.4]{AFP} implies
\begin{equation}
\label{eq:bound 1 infty}
\int_2^\infty \int_{\R^n}|\partial_tu_t|^2\,dx\,dt\leq C \int_{\R^n} |\nabla  u| .
\end{equation}
We turn next to the integral from $\e$ to $2$.  Recall that since
$\gamma_{n,t}(x):=\frac{1}{t^n}\gamma_n(x/t)$, the time derivative of 
$u_t$ can be written as
$$
\partial_tu_t(x) = \partial_t \int u(x-tz)\gamma_n(z)\,dz
= - \int \nabla  u(x-tz)\cdot z\gamma_n(z) \, dz
$$
It follows from Fubini's theorem that 
\[
\int_{\R^n} |\partial_tu_t| \,dx\leq C \int_{\R^n} |\nabla  u| 
\]
We can also write 
\begin{align*}
\partial_tu_t(x)&= \partial_t \int u(x-z)\frac{1}{t^n}\gamma_n\biggl(\frac{z}t\biggr)\,dz
=-\frac{1}{t}\int u(x-z)\biggl[n\gamma_n\biggl(\frac{z}t\biggr)
+ \nabla \gamma_n
\biggl(\frac{z}t\biggr)\cdot \frac{z}t\biggr] \,dz\\
&= -\frac{1}{t}\int u(x-tz) \bigl[n\gamma_n(z)
+ \nabla \gamma_n(z)\cdot z\bigr]  \,dz
\end{align*}
from which, along with $|u|\le 1$,  we get
$$
 \|\partial_tu_t\|_\infty \leq \frac{C}t.
$$
Thus
\begin{equation}
\label{eq:bound2}
\int_{\R^n} |\partial_tu_t|^2 \,dx\leq \frac{C}t \int_{\R^n} |\nabla u| 
\end{equation}
Combining \eqref{eq:upperSobolev},  \eqref{eq:bound 1 infty}, and \eqref{eq:bound2}, we conclude that
\begin{align*}
\| u_\e\|_{H^{1/2}}^2 & \lesssim \int_\e^\infty\int_{\R^n} |\partial_tu_t|^2\,dx\,dt  \\
& \leq \biggl(C+ \int_\e^2 \frac{C}t \,dt\biggr)\int_{\R^n}  |\nabla u| \ \leq 
C |\log \e |\, \int_{\R^n} |\nabla u|, \qquad 0 < \e < 1/2,
\end{align*}
as desired.
\end{proof}

We now show that the norm \eqref{eq:norm} controls the perimeter locally.
\begin{lemma}
\label{lem:2}
Let $E$ be a set of finite perimeter and let $x_0 \in \partial^*E$. For
$r_0 > 0$, 
$$
\liminf_{\e\to 0}\frac{1}{|\log(\e)|}\int_{B_{r_0}(x_0)} 
\int_{B_{r_0}(x_0)} \frac{|\var_\e(x)-\var_\e (y)|^2}{|x-y|^{n+1}} \,dx\,dy 
\gtrsim 
\H^{n-1}(B_{r_0/2}(x_0)\cap \partial^* E).
$$
\end{lemma}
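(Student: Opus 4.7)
Lemma~\ref{lem:2} is the key local lower bound from which the lower inequality in Theorem~\ref{thm:norm} will follow by summation. I would prove it in three stages: (i) a direct model computation for the mollified indicator of a half-space, (ii) a transfer to $E$ at a reduced boundary point using the blow-up \eqref{eq:blow up}, and (iii) aggregation via a Vitali covering of $\partial^*E\cap B_{r_0/2}(x_0)$.

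\textbf{Half-space model.} For $H = \{x_1 > 0\}$, a direct Gaussian calculation gives $\var_\e^H(x) = \Phi(x_1/\e)$ with $\Phi$ the one-dimensional standard Gaussian CDF. I would restrict the double integral over $B_\rho(0)^2$ to pairs $(x,z)$ with $x_1 < -C\e$, $z_1 > C\e$, and $|x'|,|z'| < \rho/2$; on this region $|\var_\e^H(x)-\var_\e^H(z)|\geq 1/2$ once $C$ is a large enough dimensional constant. Integrating the transverse variables first produces a factor $\sim \rho^{n-1}/\tau^2$ with $\tau = z_1 - x_1$; then integrating over the segment $\{(x_1,z_1) : z_1-x_1=\tau\}\cap (\text{allowed range})$, which has length $\sim \tau$ for $\tau \lesssim \rho$, gives
$$
\int_{B_\rho(0)^2}\frac{|\var_\e^H(x)-\var_\e^H(z)|^2}{|x-z|^{n+1}}\,dx\,dz \ \gtrsim\ \rho^{n-1}\int_{2C\e}^\rho \frac{d\tau}{\tau} \ \sim\ \rho^{n-1}|\log\e|.
$$

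\textbf{Transfer and covering.} At $y\in\partial^*E$ with inner normal $\nu(y)$, let $H(y)=y+\{\zeta\cdot\nu(y)>0\}$. By \eqref{eq:blow up}, for any $\delta>0$ and $\rho\leq\rho_0(y,\delta)$, one has $\|\one_E-\one_{H(y)}\|_{L^1(B_\rho(y))}\leq \delta\rho^n$. Writing $\var_\e^E = \var_\e^{H(y)} + g$ (the contribution of $\one_E-\one_{H(y)}$ from outside $B_\rho(y)$ is exponentially small on $B_{\rho/2}(y)$ by the Gaussian tail), and applying $|a+b|^2\geq \tfrac12|a|^2 - |b|^2$ inside the integrand, one reduces to controlling $\|g\|_{H^{1/2}(B_\rho(y))}^2$. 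Subject to that estimate (see below), one obtains a local lower bound $\int_{B_\rho(y)^2}\cdots\gtrsim\rho^{n-1}|\log\e|$. To aggregate: Egorov's theorem yields a subset $F\subset\partial^*E\cap B_{r_0/2}(x_0)$ with $\H^{n-1}(F)\geq (1-\eta)\H^{n-1}(\partial^*E\cap B_{r_0/2}(x_0))$ on which $\rho_0(y,\delta)$ is uniformly $\geq \bar\rho>0$; Vitali's covering lemma combined with the upper density \eqref{eq:fullmeasure} furnishes a pairwise disjoint subfamily $\{B_{\rho_i}(y_i)\}$ with $y_i\in F$, $\rho_i\leq\bar\rho$, and $\sum_i\rho_i^{n-1}\gtrsim\H^{n-1}(F)$. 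Disjointness implies the local contributions add:
$$
\int_{B_{r_0}(x_0)^2}\frac{|\var_\e-\var_\e|^2}{|x-z|^{n+1}}\,dx\,dz\ \geq\ \sum_i\int_{B_{\rho_i}(y_i)^2}\frac{|\var_\e-\var_\e|^2}{|x-z|^{n+1}}\ \gtrsim\ |\log\e|\,\H^{n-1}(F),
$$
and $\eta\to 0$ concludes.

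\textbf{Main obstacle.} The delicate point is the estimate on $\|g\|_{H^{1/2}(B_\rho(y))}^2$ where $g=(\one_E-\one_{H(y)})\ast\gamma_{n,\e}$. A direct application of Lemma~\ref{lem:1} to a suitable truncation of $\one_E-\one_{H(y)}$ gives a bound of the same order $\rho^{n-1}|\log\e|$ as the main term (the BV norm of the truncation being $\sim \rho^{n-1}$), with no small parameter in front. One must extract a factor $\delta^\alpha$ ($\alpha>0$) by either sharpening the Fourier-analytic bound to exploit the small $L^1$-mass $\delta\rho^n$, or by arguing directly on the ``good set'' $\{x\in B_{\rho/2}(y): |g(x)|\leq 1/4\}$---of measure $\geq (1-O(\sqrt\delta))\rho^n$ by Chebyshev from $\|g\|_{L^2(B_{\rho/2}(y))}^2 \lesssim \delta\rho^n$ (Young's inequality)---and checking that the pairs inside this good set on opposite sides of the hyperplane $\{(x-y)\cdot\nu = 0\}$ still carry enough of the singular kernel $|x-z|^{-(n+1)}$ to reproduce the logarithmic factor.
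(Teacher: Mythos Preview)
Your overall architecture (half-space model, blow-up transfer, Vitali covering) is sound, and you correctly isolate the obstacle: the subtraction $|a+b|^2\ge\tfrac12|a|^2-|b|^2$ fails because $\|g\|_{H^{1/2}(B_\rho(y))}^2$ is of the same order $\rho^{n-1}|\log\e|$ as the main term. However, your proposed ``good set'' remedy does not close the gap. The Chebyshev bound $\|g\|_{L^2(B_{\rho/2})}^2\lesssim\delta\rho^n$ controls the \emph{measure} of the bad set, but the logarithm in your half-space computation comes from the integral $\int_{2C\e}^{\rho}d\tau/\tau$, and the kernel $|x-z|^{-(n+1)}$ concentrates near the hyperplane. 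If the bad set (of measure $\sqrt\delta\,\rho^n$) sits in the slab $\{|x_1|\le\sqrt\delta\,\rho\}$, then removing it from the domain of integration costs
\[
\rho^{n-1}\int_{C\e}^{\sqrt\delta\,\rho}\frac{d\tau}{\tau}\ \sim\ \rho^{n-1}\log\bigl(\sqrt\delta\,\rho/\e\bigr)\ \sim\ \rho^{n-1}|\log\e|
\quad\text{as }\e\to0,
\]
which is again the full main term. An $L^1$ (or $L^2$) smallness at the single outer scale $\rho$ simply cannot protect the scales $\tau\ll\rho$ where the logarithm lives.

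The paper avoids this by reversing the order of the two mechanisms that produce $|\log\e|$ and the covering. Instead of fixing a ball $B_\rho(y)$ and integrating $\int_{2C\e}^\rho d\tau/\tau$ inside it, the paper slices the double integral into dyadic annuli $\{r/4\le|x-y|\le r/2\}$ and, \emph{for each} $r\in(100\e,\sqrt\e)$, re-covers $\partial^*E\cap B_{r_0/2}(x_0)$ by $N_r\gtrsim \H^{n-1}(\partial^*E\cap B_{r_0/2}(x_0))/r^{n-1}$ disjoint balls $B_r(x_j)$ with centers in the set $F_m=\{x\in\partial^*E:\ \sup_{s\le 2^{-m}}\tfrac{1}{|B_s|}\int_{B_s}|\one_E(x+\cdot)-\one_{\R^+}(\nu(x)\cdot\,\cdot\,)|\le\delta\}$. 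The sup over scales in the definition of $F_m$ is the missing ingredient: it guarantees $L^1$-closeness to the half-space \emph{at the current scale $r$}, so the ``bad set'' inside $B_r(x_j)$ has measure $\le\delta r^n$ and cannot spoil the $r^{n-1}$ contribution from that ball. One then reads off $|\var_\e(x)-\var_\e(y)|\gtrsim1$ directly on a definite fraction of $\{x\in B_{r/2}(x_j):\nu(x_j)\cdot(x-x_j)\ge r/10\}\times\{y:\nu(x_j)\cdot(y-x_j)\le -r/10\}$ (no subtraction needed, since $r\ge100\e$), yielding a contribution $\gtrsim N_r\,r^{n-1}\gtrsim\H^{n-1}(\partial^*E\cap B_{r_0/2}(x_0))$ per annulus. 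Summing over the $\sim|\log\e|$ disjoint annuli gives the lemma. Your Egorov step is essentially the choice of $m$; what you are missing is the scale-by-scale re-covering.
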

\begin{proof}
For $x\in \partial^*E$ define
\[
D_k(x) :=  \sup_{j\ge k}
\frac{1} { |B_{2^{-j} }| } \int_{B_{ 2^{-j} } }
|\one_E(x+y)-\one_{\R^+}(\nu(x)\cdot y)|\,dy,
\]
Let $\d>0$ be a small  dimensional constant  (to be fixed later).  
Let 
\[
F_m = \{ x\in \partial^* E: D_m(x) \le \delta\}.
\]
Then
\[
F_m \subset F_{m+1} \quad \text{and}\quad \bigcup_m F_m = \partial^* E
\]
by \eqref{eq:blow up}.   Furthermore,
$F_m$ is measurable because $\nu$ is measurable. Therefore,
we can choose $m$ sufficiently large that 
\[
\H^{n-1}(B_{r_0/2}(x_0)\cap F_m)  
\ge \frac12 \H^{n-1}(B_{r_0/2}(x_0)\cap \partial^* E)
\]
Let $\rho = 2^{-m}$, and suppose that $\sqrt{\e} < \rho/100$.    Fix 
$r\in (100\e, \sqrt\e)$.  By a standard covering argument\footnote{
If $\{B_r(x_j)\}_{1\leq j \leq N_r}$
is a maximal disjoint family of balls with
$x_j \in F_m \cap B_{r_0/2}(x_0)$, then $\bigcup_{1\leq j \leq N_r} B_{3r}(x_j)$ covers
$F_m \cap B_{r_0/2}(x_0)$, and by the definition of $\H^{n-1}$ (see \cite[Section 2.8]{AFP} or \cite[Chapter 3]{M}) we get 
$$
\H^{n-1}(F_m \cap B_{r_0/2}(x_0)) \leq C_n\sum_{j=1}^{N_r} (3r)^{n-1} =C_n3^{n-1} N_r\,r^{n-1},
$$
where $C_n>0$ is a dimensional constant.
}
there are $N = N_r$ disjoint balls $B_r(x_j)$, $j=1, \dots, N_r$,
such that $x_j\in F_m \cap B_{r_0/2}(x_0)$, and 
\[
N_r \gtrsim  \H^{n-1}(B_{r_0/2}(x_0)\cap \partial^* E)/r^{n-1}
\]
We now want to estimate from below
$$
\int_{x,y \in B_{r_0}(x_0),\,r/4\leq |x-y|\leq r/2}\frac{|\var_\e(x)-\var_\e (y)|^2}{|x-y|^{n+1}} \,dx\,dy.
$$
Since the balls $B_{r}(x_j)$ are disjoint we have
\begin{align*}
&\int_{x,y \in B_{r_0}(x_0),\,r/4\leq |x-y|\leq r/2}\frac{|\var_\e(x)-\var_\e (y)|^2}{|x-y|^{n+1}} \,dx\,dy\\
&\geq
\sum_{j=1}^N\int_{B_{r/2}(x_j)\cap \{\nu(x_j)\cdot(x-x_j)\geq r/10\}}\,dx
\int_{B_{r/2}(x_j)\cap \{\nu(x_j)\cdot(y-x_j)\leq -r/10\}\cap \{r/4\leq |x-y|\leq r/2\}}\,dy \,\frac{|\var_\e(x)-\var_\e (y)|^2}{|x-y|^{n+1}} .
\end{align*}
Because inside $B_{r/2}(x_j)$ the set $E$ is very close in $L^1$ to the hyperplane $\{\nu(x_j)\cdot(x-x_j)\geq 0\}$ (since $D_m(x_j)\leq \delta$) and $r \geq 100\e$,
we deduce that, provided $\delta$ is chosen sufficiently small (the smallness depending only on the dimension),
$|\var_\e(x)-\var_\e (y)| \gtrsim 1$ on a substantial fraction of
the latter integrals.  Thus, since $|x-y|\leq r/2$ and both $x$ and $y$ 
vary inside some sets whose measure is of order $r^{n}$, we get 
\begin{align*}
\sum_{j=1}^N \int_{B_{r/2}(x_j)\cap \{\nu(x_j)\cap(x-x_j)\geq r/10\}}\,dx &
\int_{B_{r/2}(x_j)\cap \{\nu(x_j)\cap(y-x_j)\leq -r/10\}}\,dy \,\frac{|\var_\e(x)-\var_\e (y)|^2}{|x-y|^{n+1}} \\
&\gtrsim N_r r^{n-1} \gtrsim
 \H^{n-1}(B_{r_0/2}(x_0) \cap \partial^* E )
\end{align*}
Thus, we proved that for any $r \in (100\e,\sqrt{\e})$
$$
\int_{x,y \in B_{r_0}(x_0),\,r/4\leq |x-y|\leq r/2}\frac{|\var_\e(x)-\var_\e (y)|^2}{|x-y|^{n+1}} \,dx\,dy\gtrsim  \H^{n-1}(B_{r_0/2}(x_0) \cap \partial^* E )
$$
Hence, choosing $r=4^{-k}$ and letting $k$ vary between $\ell_1:=\lfloor-\log_4(\sqrt{\e})\rfloor+1$
and $\ell_2:=\lfloor -\log_4(100\e)\rfloor$, for $\e$ sufficiently small we get 
\begin{align*}
\int_{B_{r_0}(x_0)} \int_{B_{r_0}(x_0)} \frac{|\var_\e(x)-\var_\e (y)|^2}{|x-y|^{n+1}} \,dx\,dy&\geq 
\sum_{k=\ell_1}^{\ell_2} \int_{x,y \in B_{r_0}(x_0),\,4^{-k}/4\leq |x-y|\leq 4^{-k}/2}\frac{|\var_\e(x)-\var_\e (y)|^2}{|x-y|^{n+1}} \,dx\,dy\\
&\gtrsim (\ell_2-\ell_1)\,
 \H^{n-1}(B_{r_0/2}(x_0) \cap \partial^* E )
\\
& \gtrsim |\log(\e)|\,
 \H^{n-1}(B_{r_0/2}(x_0) \cap \partial^* E )
\end{align*}
as desired.
\end{proof}

\begin{proof}[Proof of Theorem \ref{thm:norm}.]
The upper bound in Theorem \ref{thm:norm} follows from Lemma 
\ref{lem:1}, and the lower bound follows from Lemma  \ref{lem:2} letting $x_0=0$,
$r_0 =4$.
\end{proof}

\section{The $H^{1/2}$ norm expressed in terms of the marginals}
\label{sect:H marginals}
Here we prove the well known fact that the $H^{1/2}(\R^2)$ norm of a function 
is equal (up to constants) to the average of the $H^1(\R^2)$ norm of its marginals,
and then we prove a localized version of this identity.
The arguments in this section are specific to the case $n=2$.

Given a smooth rapidly decaying function $\var:\R^2 \to \R$, for any $\theta \in [0,\pi]$ we define the marginal
\begin{equation}
\label{eq:w theta}
w_\theta(t):=\int_\R \var\bigl(R_\theta(t,s)\bigr)\,ds,
\end{equation}
where $R_\theta:\R^2\to \R^2$ denotes the counterclockwise rotation by an angle $\theta$ around the origin.

\subsection{A global identity}
We claim that the norm
$$
\int_0^{\pi} \int_\R |w_\theta'|^2(t)\,dt\,d\theta
$$
is equivalent to the $H^{1/2}$ norm of $\var$.

To prove this, we first compute the Fourier transform of $w_\theta$.
We denote by $\ee_\theta:=R_\theta e_1=(\cos\theta,\sin\theta)$. Then
\begin{align*}
\hat w_\theta(\tau)&:= \int_\R w_\theta(t) e^{it\tau}\,dt=\int_{\R^2}  \var\bigl(R_\theta(t,s)\bigr)e^{it\tau}\,dt\,ds\\
&=\int_{\R^2}  \var\bigl(R_\theta x\bigr)e^{i\tau e_1\cdot x}\,dx=\int_{\R^2}\var(x)e^{i(\tau \ee_\theta)\cdot x}\,dx=\hat\var\bigl(\tau \ee_\theta\bigr),
\end{align*}
where (by abuse of notation) we used $\hat w$ and $\hat \var$ to denote
respectively the Fourier transform on $\R$
and on $\R^2$.

Thanks to the formula above and the fact that the Fourier transform is an isometry in $L^2$, we get
(up to a multiplicative constant)
\begin{align*}
\int_0^{\pi} \int_\R |w_\theta'|^2(t)\,dt\,d\theta
=\int_0^{\pi} \int_\R |\tau|^2|\hat w_\theta|^2(\tau)\,d\tau\,d\theta
=\int_0^{\pi} \int_\R |\tau|^2| \hat\var|^2\bigl(\tau \ee_\theta\bigr)\,d\tau\,d\theta.
\end{align*}
It is now easy to check that the last integral is simply an integration in polar coordinates,
so by setting $\xi:=\tau e^{i\theta}$ (so that $|\tau|=|\xi|$ and $d\xi=|\tau|\,d\tau\,d\theta$) we get
\begin{equation}
\label{eq:global norm bound}
\int_0^{\pi} \int_\R |w_\theta'|^2(t)\,dt\,d\theta= \int_{\R^2}|\xi| \,| \hat\var|^2(\xi)\,d\xi
= \bar c\int_{\R^2}\int_{\R^2} \frac{|\var(x) - \var(y)|^2}{|x-y|^3}\,dy\,dx
\end{equation}
for some dimensional constant $\bar c>0$,
which proves the claim.

\subsection{A localized identity}
%We now show a localized version of the identity proved above.

Let $\psi:\R\to \R$ be a smooth compactly supported function.
By the properties of the Fourier transform we have
$$
\int_0^\pi \int_\R \psi(t)^2 |w_\theta'|^2(t)\,dt\,d\theta
=\int_0^\pi   \int_\R \biggl|\int_\R\hat\psi(\tau-\sigma ) \sigma  \hat \var(\sigma  \ee_\theta)\,d\sigma \biggr|^2\,d\tau\,d\theta.
$$
We now notice that,
since  $\hat \var(s \ee_\theta)=\int_{\R^2}\var(x)e^{-is \ee_\theta\cdot x}\,dx$,
\begin{align*}
&\biggl|\int_\R\hat\psi(\tau-\sigma ) \sigma  \hat \var(\sigma  \ee_\theta)\,d\sigma \biggr|^2\\
&=\int\int\int\int \hat\psi(\tau-\sigma ) \sigma  \var(x)e^{-i\sigma  \ee_\theta\cdot x}
\overline{\hat\psi(\tau-\upsilon )} \upsilon  \var(y)e^{i\upsilon  \ee_\theta\cdot y}\,d\sigma \,d\upsilon \,dx\,dy\\
&=\int \int \var(x)\var (y) \int \hat\psi(\tau-\sigma )\sigma e^{-i\sigma  \ee_\theta\cdot x}\,d\sigma 
\overline{\int \hat\psi(\tau-\upsilon ) \upsilon  e^{-i\upsilon  \ee_\theta\cdot y}\,d\upsilon }\,dx\,dy\\
&= \int \int \var(x)\var (y) [\ee_\theta\cdot \nabla_x] \int \hat\psi(\tau-\sigma )e^{-i\sigma  \ee_\theta\cdot x}\,d\sigma 
[\ee_\theta\cdot \nabla_y] \overline{\int \hat\psi(\tau-\upsilon ) e^{-i\upsilon  \ee_\theta\cdot y}\,d\upsilon }\,dx\,dy\\
&=\int \int \var(x)\var (y) [\ee_\theta\cdot \nabla_x] [\ee_\theta\cdot \nabla_y]
\Bigl(\psi(\ee_\theta\cdot x)\psi(\ee_\theta\cdot y) e^{-i\tau \ee_\theta\cdot (x-y)}\Bigr)\,dx\,dy.
\end{align*}
Hence, integrating this expression with respect to $\tau$ and $\theta$ we get
\begin{equation}
\label{eq:local norm t} 
\begin{split}
&\int_0^\pi \int_\R \psi(t)^2 |w_\theta'|^2(t)\,dt\,d\theta\\
&=\int_0^\pi \int_{\R^2} \int_{\R^2} \var(x)\var (y) [\ee_\theta\cdot \nabla_x] [\ee_\theta\cdot \nabla_y]
\Bigl(\psi(\ee_\theta\cdot x)\psi(\ee_\theta\cdot y) \delta\bigl(\ee_\theta\cdot (x-y)\bigr)\Bigr)\,dx\,dy\,d\theta.
\end{split}
\end{equation}
We now claim that
\begin{equation}
\label{eq:decay}
 \int_0^\pi \int_{\R^2} \int_{\R^2} \Phi(x) [\ee_\theta\cdot \nabla_x] [\ee_\theta\cdot \nabla_y]
\Bigl(\psi(\ee_\theta\cdot x)\psi(\ee_\theta\cdot y) \delta\bigl(\ee_\theta\cdot (x-y)\bigr)\Bigr)\,dx\,dy\,d\theta=0
\end{equation}
for any smooth rapidly decaying function $\Phi$.
Indeed, the  integral above is equal to the limit of 
$$
\int_0^\pi \int_{B_R} \int_{\R^2} \Phi(x) [\ee_\theta\cdot \nabla_x] [\ee_\theta\cdot \nabla_y]
\Bigl(\psi(\ee_\theta\cdot x)\psi(\ee_\theta\cdot y) \delta\bigl(\ee_\theta\cdot (x-y)\bigr)\Bigr)\,dx\,dy\,d\theta
$$
as $R\to \infty$, and the latter integral is equal to
$$
-\int_0^\pi \int_{\R^2} [\ee_\theta\cdot \nabla_x \Phi(x)] \biggl(\psi(\ee_\theta\cdot x) \int_{\partial B_R} 
\psi(\ee_\theta\cdot y) \delta\bigl(\ee_\theta\cdot (x-y)\bigr)\,\nu_{\partial B_R}(y)\cdot \ee_\theta\,d\H^{1}(y)\biggr)\,dx\,d\theta.
$$
Next, we have the majorization
$$
\Bigl|\psi(\ee_\theta\cdot y) \,\nu_{\partial B_R}(y)\cdot \ee_\theta\Bigr| \leq \frac{C}{R} \qquad \text{for $R \gg 1$}
$$
(since $\psi$ is compactly supported, so $|\nu_{\partial B_R}(y)\cdot \ee_\theta|\leq C/R$ on the support of $\psi(\ee_\theta\cdot y)$).  Furthermore, for each $\theta$,
$\psi(\ee_\theta\cdot y)$ is supported on portion of the circle $\partial B_R$ of length $O(1)$.   Thus the integral is $O(1/R)$, and the claim follows.

By exchanging the roles of $x$ and $y$, we deduce that \eqref{eq:decay} holds also if we replace $\Phi(x)$ by $\Phi(y)$.
Hence, by \eqref{eq:decay} applied with $\Phi(x)=\var(x)^2$ and $\Phi(y)=\var(y)^2$ we deduce that the expression \eqref{eq:local norm t} 
is equal to
\begin{align*}
&\int_0^\pi \int_{\R^2} \int_{\R^2} |\var(x)-\var (y)|^2 [\ee_\theta\cdot \nabla_x] [\ee_\theta\cdot \nabla_y]
\Bigl(\psi(\ee_\theta\cdot x)\psi(\ee_\theta\cdot y) \delta\bigl(\ee_\theta\cdot (x-y)\bigr)\Bigr)\,dx\,dy\,d\theta\\
&=\int_{\R^2} \int_{\R^2} |\var(x)-\var (y)|^2 
\int_0^\pi \Bigl(\psi(\ee_\theta\cdot x)\psi(\ee_\theta\cdot y) \delta''\bigl(\ee_\theta\cdot (x-y)\bigr)\Bigr)\,d\theta\,dx\,dy\\
&\qquad+\int_{\R^2} \int_{\R^2} |\var(x)-\var (y)|^2 
\int_0^\pi \Bigl(\bigl[\psi'(\ee_\theta\cdot x)\psi(\ee_\theta\cdot y)
+ \psi(\ee_\theta\cdot x)\psi'(\ee_\theta\cdot y)\bigr] \,\delta'\bigl(\ee_\theta\cdot (x-y)\bigr)\Bigr)\,d\theta
\,dx\,dy\\
&\qquad+ \int_{\R^2} \int_{\R^2} |\var(x)-\var (y)|^2 
\int_0^\pi \Bigl(\psi'(\ee_\theta\cdot x)\psi'(\ee_\theta\cdot y) \delta\bigl(\ee_\theta\cdot (x-y)\bigr)\Bigr)\,d\theta\,dx\,dy.
\end{align*}
We now observe that, by the chain-rule,
$$
\delta'\bigl(\ee_\theta\cdot (x-y)\bigr)=\frac{\partial_{\theta}\delta\bigl(\ee_\theta\cdot (x-y)\bigr)}{e_\theta^\perp\cdot (y-x)},
$$
$$
\delta''\bigl(\ee_\theta\cdot (x-y)\bigr)=\frac{\partial_{\theta\theta}\delta\bigl(\ee_\theta\cdot (x-y)\bigr)}{\bigl(e_\theta^\perp\cdot (y-x)\bigr)^2}
+ \frac{\partial_{\theta}\delta\bigl(\ee_\theta\cdot (x-y)\bigr)}{\bigl(e_\theta^\perp\cdot (y-x)\bigr)\bigl(e_\theta\cdot (y-x)\bigr)}.
$$
Hence, if we integrate by parts in $\theta$ so that no derivatives fall onto $\delta\bigl(\ee_\theta\cdot (x-y)\bigr)$, we get that there is only one term
with $\bigl(\ee_\theta\cdot (x-y)\bigr)^{-2}$, and all the others are smooth functions of $\theta$ in a neighborhood of the support of
$\delta\bigl(\ee_\theta\cdot (x-y)\bigr)$  multiplied at most by $\bigl(\ee_\theta\cdot (x-y)\bigr)^{-1}$, 
namely\begin{align*}
 \int_0^\pi [\ee_\theta\cdot \nabla_x] [\ee_\theta\cdot \nabla_y]
\Bigl(\psi(\ee_\theta\cdot x)\psi(\ee_\theta\cdot y) \delta\bigl(\ee_\theta\cdot (x-y)\bigr)\Bigr) \,d\theta
 &=\int_0^\pi \frac{\psi(\ee_\theta\cdot x)\psi(\ee_\theta\cdot y)}{\bigl(\ee_\theta\cdot (x-y)\bigr)^2}\ \delta\bigl(\ee_\theta\cdot (x-y)\bigr)\,d\theta\\
&\qquad + \int_0^\pi \frac{\Psi(\theta,x,y)}{\bigl(\ee_\theta\cdot (x-y)\bigr)} \delta\bigl(\ee_\theta\cdot (x-y)\bigr)\,d\theta,
\end{align*}
where, for any $x \neq y$, $\Psi(\theta,x,y)$ is a smooth function of $\theta$ when $\ee_\theta$ is almost orthogonal to $(x-y)$
(that is, near the support of $\delta\bigl(\ee_\theta\cdot (x-y)\bigr)$). Hence, since $\delta\bigl(\ee_\theta\cdot (x-y)\bigr)$ is a distribution which is homogeneous of degree $-1$, we deduce that
$$
\biggl|\int_0^\pi \frac{\Psi(\theta,x,y)}{\bigl(\ee_\theta\cdot (x-y)\bigr)} \delta\bigl(\ee_\theta\cdot (x-y)\bigr)\,d\theta\biggr| \leq \frac{C}{|x-y|^2},
$$
and we obtain 
\begin{align*}
&\int_0^\pi \int_\R \psi(t)^2 |w_\theta'|^2(t)\,dt\,d\theta\\
&=\int_{\R^2} \int_{\R^2} |\var(x)-\var (y)|^2 \int_0^\pi \frac{\psi(\ee_\theta\cdot x)\psi(\ee_\theta\cdot y)}{\bigl(\ee_\theta\cdot (x-y)\bigr)^2}\ \delta\bigl(\ee_\theta\cdot (x-y)\bigr)\,d\theta\\
&\qquad+\int_{\R^2} \int_{\R^2} |\var(x)-\var (y)|^2 O(|x-y|^{-2})\,dx\,dy.
\end{align*}
We now observe that, being the expression inside the first integral  positive, it decreases if we localize it with a cut-off function $\chi(x)\chi(y)$.
In particular, if the support of $\chi(x)\chi(y)$ is contained inside the one of $\psi(\ee_\theta\cdot x)\psi(\ee_\theta\cdot y)$ for any $\theta \in [0,\pi]$,
since\footnote{
One way to prove this identity is to observe that $\int_0^\pi \frac{1}{\left(\ee_\theta\cdot (x-y)\right)^2}\ \delta\left(\ee_\theta\cdot (x-y)\right)\,d\theta$ is homogeneous of degree $-3$ in $x-y$,
it is invariant under rotations, and it is positive on positive functions.}
$$
\int_0^\pi \frac{1}{\bigl(\ee_\theta\cdot (x-y)\bigr)^2}\ \delta\bigl(\ee_\theta\cdot (x-y)\bigr)\,d\theta=\frac{\hat c}{|x-y|^3},\qquad \hat c>0,
$$
we get
\begin{equation}
\label{eq:local norm bound}
\begin{split}
\int_0^\pi \int_\R \psi(t)^2 |w_\theta'|^2(t)\,dt\,d\theta
&\geq \hat c\int_{\R^2} \int_{\R^2} \frac{|\var(x)-\var (y)|^2}{|x-y|^3} \chi(x)\chi(y)\,dx\,dy\\
&\qquad -C\int_{\R^2} \int_{\R^2} \frac{|\var(x)-\var (y)|^2}{|x-y|^2}\,dx\,dy
\end{split} 
\end{equation}
for any $\chi:\R^n\to [0,1]$ whose
support is contained inside the one of $\psi(\ee_\theta\cdot x)$ for any $\theta \in [0,\pi]$.

\section{Proof of Theorem \ref{thm:main}}
\label{sect:proof}
We first prove the result when $n=2$.

Let $\var_\e:=\one_E\ast \gamma_{2,\e}$, and let $w_\theta$ and $w_{\theta,\e}$ denote respectively the marginals of $\one_E$ and of $\var_\e$ (see \eqref{eq:w theta}).
Because Gaussian densities tensorizes, it is immediate to check that they commute with marginals, so the following identity holds:
$$
w_{\theta,\e}=w_\theta \ast \gamma_{1,\e}.
$$
By \eqref{eq:global norm bound} and Lemma \ref{lem:1} we see that 
$$
\frac{1}{|\log(\e)|}\int_0^\pi \int_\R |w_{\theta,\e}'|^2(t)\,dt\,d\theta=\frac{\bar c}{|\log(\e)|}\int_{\R^2} \int_{\R^2} \frac{|\var_\e(x)-\var_\e (y)|^2}{|x-y|^3}\,dx\,dy\leq C,
$$
so the measures 
$$
\nu_\e(dt,d\theta):=\frac{|w_{\theta,\e}'|^2(t)}{|\log(\e)|}\,dt\,d\theta,\qquad \mu_\e(dx,dy):=\frac{1}{|\log(\e)|}\frac{|\var_\e(x)-\var_\e (y)|^2}{|x-y|^3}\,dx\,dy
$$
are equibounded and they
weakly converge (up to a subsequence) to measures $\nu(dt,d\theta)$ and $\mu(dx,dy)$.

Since $\frac{|\var_\e(x)-\var_\e (y)|^2}{|\log(\e)|}\to 0$ as $\e \to 0$, it follows 
that the measure $\mu$ is concentrated on the diagonal $\{x=y\}$.

Concerning $\nu$, let us denote by $(a_\theta,b_\theta)$ the interval $\{w_\theta>0\}$, and observe that,
thanks to our assumption, there exists a constant $\bar C_\delta>0$ such that
$|w_{\theta,\e}'| \leq \bar C_\delta$ inside $[a_\theta+\delta,b_\theta-\delta]$ for all $\delta>0$, uniformly in $\e$ and $\theta$.
Hence, by integrating $\nu_\e$ against a test function $f(\theta,t)$ which is zero in a neighborhood of
$$
S:=\bigcup_{\theta \in [0,\pi]} \{a_\theta,b_\theta\}\times \{\theta\}\subset \R\times [0,\pi]
$$
and letting $\e \to 0$ we get
$$
\int f(t,\theta)\,\nu(dt,d\theta)=0,
$$
and by the arbitrariness of $f$ we deduce that
$\nu$ is concentrated on $S$.
%, i.e., it can be written as
%$$
%\nu(dt,d\theta)=[C_\theta^1\delta_{a_\theta}(dt)+C_\theta^2\delta_{b_\theta}(dt)]\otimes \bar\nu(d\theta)
%$$
%for some measure $\bar \nu$ on $[0,\pi]$.

We now make the following observation: in \eqref{eq:local norm bound} we have related the local $H^{1/2}$ norm of $\var_\e$
to another norm which depends only on the marginals of $\var_\e$.
The key fact is that the choice of the origin is completely arbitrary.
So, we argue as follows.  

Replace $E$ by $E^{(1)}$, the set of its density one points, i.e.,
$$
E^{(1)}:=\biggl\{x \in \R^n: \lim_{r \to 0}\frac{|E\cap B_r(x)|}{|B_r|}=1\biggr\}, 
$$
and define $\mathcal C$ as the convex hull of $E^{(1)}$.
If $E^{(1)}$ is not convex, then we can find a point $x_0 \in \partial^* E\setminus \partial \mathcal C$ which belongs to the interior of $\mathcal C$.
Let us fix a system of coordinates to that $x_0$ is the origin. With this choice,
for any $\theta \in [0,\pi]$ the set $(a_\theta,b_\theta)=\{w_\theta>0\}$ coincides with the projection of $\mathcal C$ onto the line in the direction of $\ee_\theta$,
and because $x_0$ belongs to the interior of $\mathcal C$
there exists a small constant $\rho>0$ such that
$$
[-\rho,\rho] \subset (a_\theta,b_\theta)\qquad \forall\,\theta\in [0,\pi].
$$
Let us take a cut-off function $\psi(t)$ supported inside $[-\rho,\rho]$,
and then choose a cut-off function $0 \leq \chi \leq 1$ such that 
the support of $\chi(x)\chi(y)$ is contained inside the one of $\psi(\ee_\theta\cdot x)\psi(\ee_\theta\cdot y)$  for any $\theta \in [0,\pi]$,
and $\chi =1$ inside $B_{r_0}(x_0)$ for some small $r_0$. Then, since
$\nu$ is concentrated on $S$, by \eqref{eq:local norm bound} applied to $w_{\theta,\e}$ and $\var_\e$, Lemma \ref{lem:2}, \eqref{eq:fullmeasure}, and the fact that $\mu$ is concentrated on $\{x=y\}$, we get
\begin{align*}
0&= \int_0^\pi \int_\R \psi(t)^2\nu(dt,d\theta)\\
&\geq\liminf_{\e\to 0} \frac{\hat c}{|\log(\e)|}\int_{B_{r_0}(x_0)} \int_{B_{r_0}(x_0)} \frac{|\var_\e(x)-\var_\e (y)|^2}{|x-y|^3} \,dx\,dy \\
& \qquad \qquad \qquad 
\qquad \qquad \qquad -\frac{C}{|\log(\e)|}\int_{\R^2} \int_{\R^2} \frac{|\var_\e(x)-\var_\e(y)|^2}{|x-y|^2}\,dx\,dy \\
&\gtrsim r_0^{n-1} -C\int_{\R^2} \int_{\R^2}|x-y|\,\mu(dx,dy) = r_0^{n-1}>0
\end{align*}
a contradiction which concludes the proof in the two dimensional case.\\

For the general case we argue as follows: let $\pi\subset \R^n$ be a two dimensional plane passing through the origin,
and for any $\ee \in \mathbb S^{n-1}\cap \pi$ consider the projection of $E$ onto the hyperplane $\ee^\perp$.

Thanks to our assumption, if we slice $E$ with some translate $\pi_v:=\pi+v$ ($v \in \R^n$) of $\pi$,
by the slicing formula (see for instance \cite[Theorem 18.11 and Remark 18.13]{M}) 
the set $E\cap \pi_v\subset \pi_v\simeq \R^2$ is a bounded set of finite perimeter for $\H^n$-a.e. $v \in \R^n$. In addition,
$E\cap \pi_v$ satisfies the assumptions of our theorem with $n=2$.
Hence, by what we proved above, $E\cap \pi_v$ coincides with a convex set up to a set of $\H^2$-measure zero.

We now show that $E^{(1)}$ is convex.
Fix $x,y \in E^{(1)}$ and $t \in (0,1)$, and pick a plane $\pi$ such that $x-y \in \pi$. By the discussion above
we deduce that, for $\H^n$-a.e. $v \in B_r(y)$,
$$
E\cap \pi_v \quad \text{is equal to a convex set up to a set of $\H^2$-measure zero}.
$$
Hence, by Fubini's Theorem we obtain the following:
for $\H^n$-a.e. $v \in B_r(y)$,
for $\H^2$-a.e. $z \in B_r(y)\cap E \cap (E-(x-y))\cap \pi_v$, the set
$$
[z,z+(x-y)]\cap E\quad \text{is equal to a segment up to a set of $\H^1$-measure zero}
$$
(here we use $[z,z+(x-y)]$ to denote the segment from $z$ to $z+(x-y)$).
From this fact and  Fubini's theorem (again), it follows that
\begin{multline}
\label{eq:segment}
\H^1\bigl(\bigl([z,z+(x-y)]\cap B_r(tx+(1-t)y)\bigr)\setminus E\bigr)
=0\qquad \text{for $\H^n$-a.e. $z \in B_r(y)\cap E \cap (E-(x-y))$.}
\end{multline}
Since both $x,y \in E^{(1)}$ we see that
$$
\liminf_{r\to 0}\frac{|B_r(y)\cap E \cap (E-(x-y))|}{|B_r|} \geq 1 - \lim_{r\to 0}\frac{|B_r(y)\setminus E|}{|B_r|} - \lim_{r\to 0}\frac{|B_r(x)\setminus E|}{|B_r|}=1,
$$
so it follows easily from \eqref{eq:segment} that 
$$
\lim_{r\to 0}\frac{|B_r(tx+(1-t)y)\setminus E|}{|B_r|} =0,
$$
proving that $tx+(1-t)y \in E^{(1)}$. By the arbitrariness of $x,y,t$ we deduce that $E^{(1)}$ is convex, concluding the proof.

\end{document}